\newtheorem{prop}{Proposition}[section]
\newtheorem{thm}[prop]{Theorem}
\newtheorem{lem}[prop]{Lemma}
\theoremstyle{definition}
\newtheorem{rem}[prop]{Remark}
\newtheorem*{ack}{Acknowledgement}
\def\co{\colon\thinspace}
\newcommand{\C}{\mathbb C}
\newcommand{\rmd}{\mathrm d}
\newcommand{\F}{\mathbb F}
\newcommand{\MM}{\mathcal M}
\newcommand{\N}{\mathbb N}
\newcommand{\bfp}{\mathbf p}
\newcommand{\bfq}{\mathbf q}
\newcommand{\bfu}{\mathbf u}
\newcommand{\Z}{\mathbb Z}
\newcommand{\bfz}{\mathbf z}
\newcommand{\lra}{\longrightarrow}
\newcommand{\ra}{\rightarrow}
\DeclareMathOperator{\cp}{\mathrm{Cap}}
\DeclareMathOperator{\ev}{\mathrm{ev}}
\DeclareMathOperator{\fs}{\mathrm{FS}}
\DeclareMathOperator{\id}{\mathrm{id}}
\DeclareMathOperator{\Int}{\mathrm{Int}}
\DeclareMathOperator{\pt}{\mathrm{pt}}
\DeclareMathOperator{\st}{\mathrm{st}}
\begin{document}

\author{Myeonggi Kwon}
\author{Kai Zehmisch}
\address{Mathematisches Institut, Justus-Liebig-Universit\"at Gie{\ss}en,
Arndtstra{\ss}e 2, D-35392 Gie{\ss}en, Germany}
\email{Myeonggi.Kwon@math.uni-giessen.de, Kai.Zehmisch@math.uni-giessen.de}

\title[Fillings and fittings]{Fillings and fittings of unit cotangent bundles of odd-dimensional spheres}

\date{02.04.2019}

\begin{abstract}
  We introduce the concept of fittings to symplectic fillings
  of the unit cotangent bundle of odd-dimensional spheres.
  Assuming symplectic asphericity
  we show that all fittings
  are diffeomorphic to the respective unit co-disc bundle.
\end{abstract}

\subjclass[2010]{57R17; 32Q65, 53D35, 57R80}
\thanks{This research is part of a project in the SFB/TRR 191
{\it Symplectic Structures in Geometry, Algebra and Dynamics}, 
funded by the DFG}

\maketitle

%%%%%%%%%%%%%%%%%%%%%%%%%%%%%%%%%%%%%%%%

\section{Introduction\label{sec:intro}}

%%%%%%%%%%%%%%%%%%%%%%%%%%%%%%%%%%%%%%%%
%%%%%%%%%%%%%%%%%%%%%%%%%%%%%%%%%%%%%%%%

There is a general question in symplectic geometry:
How much does the boundary of a compact symplectic manifold
know about its interior?
When the boundary is of contact type,
the question becomes meaningful.
In fact,
this question is of high current interest in dimension $4$.
But much less is known about the topology of the fillings in higher dimensions.
By results of
Eliashberg--Floer--McDuff, see \cite[Theorem 1.5]{mcd91},
and Barth--Geiges--Zehmisch \cite{bgz}
the diffeomorphism type of
symplectically aspherical fillings
is uniquely determined for
many subcritically Stein fillable contact manifolds.
But almost nothing is known in the {\it critical} case so far.
This paper is about symplectically aspherical fillings of
the unit cotangent bundle of odd-dimensional spheres.
These are never subcritically Stein by \cite[Theorem 1.2]{bgz}
because the unit cotangent bundle is of critical Stein type.
For basic notions the reader is referred to Geiges' book \cite{gei08}.

For a given natural number $d$
we consider the cotangent bundle $T^*S^{2d+1}$
of the round $(2d+1)$--dimensional sphere.
Like every cotangent bundle,
$T^*S^{2d+1}$ admits the Liouville canonical $1$-form $\lambda$,
which in local coordinates is given by $\bfp\,\rmd\bfq$.
The induced contact structure
on the unit cotangent bundle
\[
M=ST^*S^{2d+1}
\]
is denoted by $\xi$.
The resulting contact manifold $(M,\xi)$
admits a natural symplectic filling
$(W_{\!\st},\omega_{\st})$
given by the unit disc bundle
$\big(DT^*S^{2d+1},\rmd\lambda\big)$.
We call $(W_{\!\st},\omega_{\st})$
the {\bf standard filling}.

We would like to find natural conditions
under which a symplectic filling $(W,\omega)$ of $(M,\xi)$
is diffeomorphic to $W_{\!\st}$.
Observe that a blow up of $(W,\omega)$ at any interior point
results in a new symplectic filling of $(M,\xi)$,
which cannot be diffeomorphic to $W_{\!\st}$.
The reason is that diffeomorphically
the blow up is a connected sum with $\C P^{4d+2}$
with the opposite orientation
and therefore contains a homologically non-trivial
symplectic $2$-sphere.
In order to exclude this ambiguity
we require all fillings to be {\bf symplectically aspherical},
that is we require the symplectic form to be zero
evaluated on all spherical homology $2$-classes.

Unfortunately,
symplectic asphericity seems not to be rigid enough
to determine the diffeomorphism type
of symplectic fillings
of the unit cotangent bundle $(M,\xi)$ of $S^{2d+1}$.
That's why we want to search for an additional condition.
In dimension $4$ holomorphic curves as such are often self-dual
in a symplectic way and in terms of Poincar\'e duality.
This is crucial for uniqueness of symplectic fillings in dimension $4$.
Therefore,
we are looking for symplectic hypersurfaces
inside a given filling $(W,\omega)$
that behave nicely w.r.t.\ the boundary of $(W,\omega)$
and that remember a sufficiently large amount of symplectic informations
of $(W_{\!\st},\omega_{\st})$.

In the given situation this means the following:
The standard filling $(W_{\!\st},\omega_{\st})$ itself
appears as a Weinstein neighbourhood
of the graph of the anti-Hopf map inside $\C^{d+1}\times\C P^d$,
see Section \ref{sec:antihopf},
and embeds into the obvious partial compactification
$\C P^1\times\C^d\times\C P^d$.
The intersection of $W_{\!\st}$ with $\C P^1\times\C^d\times\C P^{d-1}$
defines a complex hypersurface in $(W_{\!\st},\omega_{\st})$
relative to the boundary.
This complex hypersurface
inherits the same homologically trivial intersection behaviour
as holomorphic spheres $\C P^1\times\pt\times\pt$
and complex hypersurfaces $\C P^1\times\C^d\times\C P^{d-1}$
have inside $\C P^1\times\C^d\times\C P^d$.
We call a filling that admits such a complex hypersurface
a {\bf fitting},
see Section \ref{sec:definition}.
As it will turn out fittings are the right concept to ensure uniqueness:

\begin{thm}
\label{thm:installabilitytheorem}
If $(W,\omega)$ is a symplectically aspherical fitting of $(M,\xi)$,
then $W$ and $W_{\!\st}$ are diffeomorphic.
\end{thm}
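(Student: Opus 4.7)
The plan is to follow the Eliashberg--Floer--McDuff strategy used in \cite{mcd91} and \cite{bgz}, with the distinguished hypersurface $V$ from the fitting playing the role of a higher-dimensional analogue of the four-dimensional positivity of intersections. The first step is a partial compactification: building on the embedding $W_{\!\st}\hookrightarrow\C P^1\times\C^d\times\C P^d$ described just before the theorem, I would glue to $(W,\omega)$ a symplectic cap modelled on $(\C P^1\times\C^d\times\C P^d)\setminus\Int(W_{\!\st})$, obtaining an open symplectic manifold $(\widetilde W,\widetilde\omega)$ in which $V$ extends to a hypersurface $\widetilde V$ and whose cap is foliated by the $\C P^1$-fibers $\C P^1\times\{\bfq\}\times\{\pt\}$. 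Let $A\in H_2(\widetilde W;\Z)$ be the homology class of such a fiber; the intersection data of a fitting ensure that $A\cdot[\widetilde V]=0$.

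Next I would choose a compatible almost complex structure $J$ on $\widetilde W$ making $\widetilde V$ $J$-holomorphic and coinciding on the cap with the standard integrable structure, so that the model fibers are already $J$-holomorphic spheres in class $A$. I would then study the moduli space $\MM$ of unparametrized $J$-spheres in class $A$. The goal is to show that these spheres assemble into a $\C P^1$-bundle on $\widetilde W$ that restricts to the standard projection on the cap. Since a $\C P^1$-bundle structure determines the total space up to diffeomorphism once its restriction to an open piece is fixed, this would force $\widetilde W \cong \widetilde W_{\!\st}$, and hence $W\cong W_{\!\st}$ after removing the cap.

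The technical heart is compactness of $\MM$. Energy is bounded by $\widetilde\omega(A)$, so Gromov compactness produces at worst nodal limits; the task is to exclude nontrivial bubbling. Symplectic asphericity of $(W,\omega)$ kills any bubble whose image lies entirely in $W$, since it would have zero area. For configurations with components crossing into $W$ I would invoke positivity of intersection between $J$-holomorphic curves and the $J$-holomorphic hypersurface $\widetilde V$: since $A\cdot[\widetilde V]=0$, each nonconstant curve in $\MM$ must be disjoint from $\widetilde V$, and the same intersection-budget argument constrains the bubble components of any stable broken configuration. An open-and-closed argument then propagates the foliation from the cap across all of $\widetilde W$, and the evaluation map delivers the diffeomorphism.

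The main obstacle is precisely this compactness and gluing step. Since $\widetilde W$ is non-compact, standard Gromov compactness must be augmented by a careful analysis of curves approaching the concave end created by the cap; this is where the fitting hypothesis does the real work. The hypersurface $\widetilde V$ plays the separating role that the canonical class and positivity of intersections play in dimension four, and symplectic asphericity alone would not suffice without it---as the introduction already notes in motivating the definition.
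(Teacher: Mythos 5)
Your proposal captures the broad architecture that the paper uses—cap construction, moduli space of holomorphic spheres in the $\C P^1$-fibre class, symplectic asphericity to kill bubbles inside $W$, positivity of intersections with the fitting hypersurface to control the remaining bubbles—but the proposed mechanism for concluding $W\cong W_{\!\st}$ has a genuine gap.

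The claim that the spheres ``assemble into a $\C P^1$-bundle on $\widetilde W$'' and that such a bundle structure ``determines the total space up to diffeomorphism once its restriction to an open piece is fixed'' is exactly the step that fails in this critical, high-dimensional setting. In the original Eliashberg--Floer--McDuff situation and in dimension four the holomorphic spheres really do foliate, because positivity of intersections forces distinct spheres in a fixed class with trivial self-intersection to be disjoint. In dimension $4d+2$ with $d\geq1$ the spheres in class $[C_1]$ may well intersect one another, so the evaluation map $\widehat\ev\co\MM\times\C P^1\ra\hat Z$ is typically \emph{not} a diffeomorphism and the images do not form a foliation. What one actually obtains from properness is that $\widehat\ev$ is a \emph{proper degree one} map (Lemma \ref{lem:properthandiffeo}), and all this buys you is surjectivity on $\pi_1$ and on $H_*(\,.\,;\F)$. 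The paper then has to run a separate, purely topological argument—Lemma \ref{lem:surjectiveimpliesdiffeo} and Proposition \ref{prop:homologyenough}, the latter via an explicit $h$-cobordism built from the spherical shell in $T^*S^{2d+1}$ (which is where the nowhere-vanishing vector field on the odd-dimensional sphere enters)—to promote this homological information to a diffeomorphism. That $h$-cobordism step is entirely absent from your proposal, and without it the ``open-and-closed propagation of the foliation'' is asserted rather than proved.

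A smaller but real imprecision: you invoke the fitting hypothesis to conclude $A\cdot[\widetilde V]=0$ for the fibre class $A=[C_1]$. That identity is a local computation in the model cap ($C_1\cdot H_2=0$) and holds regardless of the filling. The fitting hypothesis is used elsewhere: when a bubble $u_1$ has homology class $[u_1]=S_1+[C_1]-[C_2]$ with a nontrivial spherical part $S_1\in H_2W$, one needs $\varphi_*[H_2]\cdot S_1=0$ to force $[\varphi(H_2)]\cdot[u_1]=-1<0$ and contradict positivity of intersections. This is precisely the input the fitting gives you, and misplacing it means the bubble-exclusion argument you sketch does not close.
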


%%%%%%%%%%%%%%%%%%%%%%%%%%%%%%%%%%%%%%%%

\section{From homology groups to diffeomorphism type\label{sec:topologicalpart}}

In Section \ref{sec:holfillviacap} we will verify
the assumptions of the following proposition
for fittings.
The argumentation will be similar to earlier work
which can be found in \cite[Section 5]{bgz} and \cite[Section 4]{bschz19}.

\begin{prop}
\label{prop:homologyenough}
  Let $(W, \omega)$ be a symplectically aspherical filling of $(M,\xi)$.
  If $W$ is simply connected
  and has the homology type of $W_{\!\st}$,
  i.e.\ $H_kW=H_kW_{\!\st}$ for all $k$,
  then $W$ is diffeomorphic to $W_{\!\st}$.
  In other words,
  the diffeomorphism type of $W$ is unique in this case.
\end{prop}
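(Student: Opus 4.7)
The plan is to convert the homological hypothesis into a handle-theoretic description of $W$ and then invoke the $h$-cobordism theorem. To begin, I would use the deformation retraction of $W_{\!\st}=DT^*S^{2d+1}$ onto its zero section to record that $H_*W_{\!\st}\cong H_*S^{2d+1}$, so under the hypothesis $W$ is a simply connected $(4d+2)$-manifold with the homology of $S^{2d+1}$. Hurewicz and Whitehead then provide a homotopy equivalence $f\co S^{2d+1}\to W$ representing the generator of $H_{2d+1}W$.

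Next I would promote $f$ to a smooth embedding. After a generic perturbation, $f$ is an immersion with transverse double points; since $W$ is simply connected and $\dim W=4d+2\geq 6$, the Whitney trick pairs up and cancels these double points provided the algebraic self-intersection of $[\Sigma]$ vanishes. To verify the latter, I would combine the long exact sequence of the pair $(W,M)$ with Poincar\'e--Lefschetz duality. Since $M$ is the unit cotangent bundle of an odd sphere, the Euler class of the fibration $M\to S^{2d+1}$ vanishes, and the resulting Gysin sequence gives $H_{2d+1}M\cong\Z$ mapping surjectively onto $H_{2d+1}W$; this forces the map $H_{2d+1}W\to H_{2d+1}(W,M)$ to vanish, which is exactly the self-intersection vanishing needed. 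I thus obtain an embedded $\Sigma\subset\Int W$ with tubular neighbourhood $N(\Sigma)\cong D\nu$ for a rank-$(2d+1)$ bundle $\nu\to S^{2d+1}$.

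Set $W':=W\setminus\Int N(\Sigma)$. Mayer--Vietoris for the decomposition $W=W'\cup N(\Sigma)$, combined with the fact that $N(\Sigma)\hookrightarrow W$ is a homotopy equivalence, shows that the inclusions $\partial N(\Sigma)\hookrightarrow W'$ and $M\hookrightarrow W'$ both induce isomorphisms on integral homology; Seifert--van Kampen gives $\pi_1W'=1$ in the same way. Thus $W'$ is a simply connected $h$-cobordism between $S\nu=\partial N(\Sigma)$ and $M$, and Smale's theorem yields $W'\cong M\times[0,1]$. Therefore $W\cong D\nu$ as smooth manifolds.

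Finally, I must identify $D\nu$ with $DT^*S^{2d+1}$. Rank-$(2d+1)$ vector bundles over $S^{2d+1}$ are classified by $\pi_{2d}\mathrm{SO}(2d+1)$, and the equality of boundary sphere bundles $S\nu\cong ST^*S^{2d+1}$ together with the stabilisation map to $\pi_{2d}\mathrm{SO}(2d+2)$ narrows the classifying element down to that of $T^*S^{2d+1}$; the symplectic collar structure along $M$ is available to fix any residual framing ambiguity. The main obstacle I anticipate is the embedding step, which sits in the borderline Whitney dimension $2(2d+1)$: both the self-intersection computation and the subsequent Whitney moves rely on simple connectedness of $W$ and on the precise homology of $M=ST^*S^{2d+1}$ in essential ways.
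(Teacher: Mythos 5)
Your argument takes a genuinely different route from the paper. The paper glues $W$ into $D_RT^*S^{2d+1}$ in place of $W_{\!\st}$, uses a nowhere vanishing vector field on $S^{2d+1}$ to locate a \emph{shifted copy} $W_0\cong W_{\!\st}$ inside the resulting shell, and applies the $h$-cobordism theorem to $X=W_1\setminus\Int W_0$. The punch line is that a concrete disc bundle $W_0$ is produced by an ambient shift, so there is never any need to identify an abstract normal bundle. Your intrinsic approach — embed a sphere $\Sigma\subset W$, cut out a tube, $h$-cobord the complement — is a natural thing to try, but it runs into two gaps precisely where the paper's shift trick buys something real.

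First, the embedding step. You propose to kill double points by the Whitney trick and argue that ``the algebraic self-intersection of $[\Sigma]$ vanishes'' because $H_{2d+1}W\to H_{2d+1}(W,M)$ is zero. That computation is correct, but it is the wrong obstruction. For $n=2d+1$ odd, $[\Sigma]\cdot[\Sigma]=0$ automatically, since the intersection pairing on $H_{2d+1}$ of a $(4d+2)$-manifold is skew-symmetric; so the long exact sequence argument proves nothing beyond what sign conventions already give you for free. The actual obstruction to making a generic immersion $S^{2d+1}\looparrowright W^{4d+2}$ into an embedding is Wall's self-intersection $\mu(f)$, which for odd $n$ takes values in $\Z/2$, and your homological computation does not touch it. One might hope to appeal to Haefliger's metastable embedding theorem instead, but for $d=1$ the inequality $2\cdot\dim W\geq 3(\dim\Sigma+1)$ becomes the borderline equality $12\geq 12$, so even that route is delicate for the lowest-dimensional case covered by the proposition.

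Second, and more seriously, the bundle identification at the end does not follow. The $h$-cobordism gives you a \emph{diffeomorphism} $W'\cong M\times[0,1]$, hence a diffeomorphism of total spaces $S\nu\cong ST^*S^{2d+1}$; it does not give an isomorphism of sphere bundles over $S^{2d+1}$, so you cannot read off the clutching class of $\nu$ from that of $T^*S^{2d+1}$. Both bundles are stably trivial (yours because $TW|_\Sigma$ is a complex bundle over $S^{2d+1}$ and $\pi_{2d}(U)=0$; the cotangent bundle because odd spheres are stably parallelizable) and both have vanishing Euler class, but the kernel of $\pi_{2d}\mathrm{SO}(2d+1)\to\pi_{2d}\mathrm{SO}(2d+2)$ is generated by the clutching class of $TS^{2d+1}$ and need not be trivial, so this leaves the class of $\nu$ an undetermined multiple of that of $T^*S^{2d+1}$. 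Nothing in your argument pins down that multiple; ``the symplectic collar structure'' is not available, since $\Sigma$ was produced by purely topological means and is not Lagrangian. This is exactly the ambiguity the paper's construction was designed to avoid: rather than identifying the normal bundle of an abstract sphere inside $W$, the paper imports a ready-made copy of $DT^*S^{2d+1}$ from the ambient cotangent bundle using the vector field shift, and then only needs an $h$-cobordism of the complement.
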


This section is intended to provide evidence for this statement.

%%%%%%%%%%%%%%%%%%%%%%%%%%%%%%%%%%%%%%%%

\subsection{A cobordism\label{subsec:acobordism}}

We consider the cotangent disc bundle
$D_RT^*S^{2d+1}$
of radius $R>1$
so that the unit cotangent disc bundle
$W_{\!\st}=DT^*S^{2d+1}$
appears as a subset of $D_RT^*S^{2d+1}$.
Via gluing of smooth manifolds with boundary
we define 
\[
W_1:=
\big(D_RT^*S^{2d+1}\setminus\Int W_{\st}\big)
\cup_MW
\]
by replacing $W_{\!\st}$ with $W$ inside
$D_RT^*S^{2d+1}$.
For sufficiently large radius $R$
the spherical shell bundle
$\big(D_RT^*S^{2d+1}\setminus\Int W_{\st}\big)$
contains a copy $W_0$ of $W_{\!\st}$.
This copy of $W_{\!\st}$ is obtained by
first taking a section $s$
of the sphere bundle $ST^*S^{2d+1}\ra S^{2d+1}$
and then shifting $DT^*S^{2d+1}$ suitably
inside $T^*S^{2d+1}$ in the direction of $s$.
This shifted copy of $W_{\!\st}$
is also considered to be contained in $W_1$.
We define a cobordism $X$
between $M_0:=\partial W_0$ and $M_1:=\partial W_1$ by
\[
X:=W_1\setminus\Int W_0\,.
\]

\begin{rem}
 The construction of the cobordism $X$
 heavily relies on the existence
 of a nowhere vanishing vector field on $S^{2d+1}$.
 This is the first of two places in our argument
 that requires the spheres $S^{2d+1}$ to have odd dimensions.
 The second one is the use of the Hopf fibration in Section \ref{sec:antihopf}. 
\end{rem}

%%%%%%%%%%%%%%%%%%%%%%%%%%%%%%%%%%%%%%%%

\subsection{Applying the $h$-cobordism theorem\label{subsec:applhcobordism}}

Under the assumptions of Proposition \ref{prop:homologyenough}
the cobordism $X$ is an $h$-cobordism:
The boundary components $M_0$ and $M_1$
are both $S^{2d}$-bundles over $S^{2d+1}$, $d\geq1$, and,
hence, simply connected thanks to the corresponding homotopy sequence.
The cobordism $X$ itself is simply connected too.
Indeed, $W_1\simeq W$ is simply connected by assumption.
The contraction $S^{2d+1}\simeq W_{\st}$
induces a contraction $S\simeq W_0$
of $W_0$ onto the shifted copy $S$ of $S^{2d+1}$
under the shift that we used in the construction of $X$
in Section \ref{subsec:acobordism}.
This contraction defines a contraction of $W_1\setminus S$ onto $\Int X$.
Simply connectedness of $X$ now follows
with the simply connectedness of $W_1$
together with a general position argument that uses
$2+(2d+1)<4d+2=\dim W_1$.
It follows that the inclusions
$M_0\hookrightarrow X$ and $M_1\hookrightarrow X$
of the boundaries
induce isomorphisms of the respective fundamental groups.

\begin{lem}
\label{lem:hcobor}
$H_*(X,M_0)=0$.
\end{lem}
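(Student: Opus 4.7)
The plan is to use excision to rewrite $H_*(X, M_0)$ as $H_*(W_1, W_0)$ and then verify, via the long exact sequence of the latter pair, that the inclusion $W_0 \hookrightarrow W_1$ is a homology isomorphism. Since $W_1 = W_0 \cup X$ with $W_0 \cap X = M_0$ and $M_0$ admits a collar in $W_0$, the standard excision (shrinking $W_0$ inside this collar) gives $H_*(X, M_0) \cong H_*(W_1, W_0)$. Both $W_0 \cong DT^*S^{2d+1}$ and $W_1 \simeq W$ have the homology of $S^{2d+1}$ by assumption, so the only degree in which the map $H_k(W_0) \to H_k(W_1)$ could fail to be an isomorphism is $k = 2d+1$.

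A generator of $H_{2d+1}(W_0) \cong \Z$ is represented by the zero section $S$ of $W_0$, that is, the shifted copy of $S^{2d+1}$ produced in Section \ref{subsec:acobordism}. Since $S$ lies inside the spherical shell $D_RT^*S^{2d+1} \setminus \Int W_{\st}$, which deformation-retracts radially onto $M$ carrying $S$ to $s(S^{2d+1})$, the image of $[S]$ in $H_{2d+1}(W_1) = H_{2d+1}(W)$ equals $\iota_*\sigma$, where $\iota\co M \hookrightarrow W$ is the boundary inclusion and $\sigma := s_*[S^{2d+1}] \in H_{2d+1}(M)$. The equality $\pi \circ s = \id$ for the bundle projection $\pi$ makes $s_*$ a split injection, and $H_{2d+1}(M) \cong \Z$ (as follows from the Gysin sequence of $M \to S^{2d+1}$, whose Euler class vanishes because $\chi(S^{2d+1}) = 0$); hence $\sigma$ is a generator. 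It therefore suffices to prove that $\iota_*\co H_{2d+1}(M) \to H_{2d+1}(W)$ is an isomorphism.

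For this we invoke Poincaré--Lefschetz duality. The hypothesis $H_*W \cong H_*W_{\st}$ together with the universal coefficient theorem yield $H^*(W) \cong H^*(S^{2d+1})$, whence $H_{2d+2}(W, M) \cong H^{2d}(W) = 0$ and $H_{2d+1}(W, M) \cong H^{2d+1}(W) \cong \Z$. Combined with $H_{2d}(W) = 0$, the long exact sequence of $(W, M)$ reduces in the relevant range to
\[
0 \to H_{2d+1}(M) \xrightarrow{\iota_*} H_{2d+1}(W) \to H_{2d+1}(W, M) \xrightarrow{\partial} H_{2d}(M) \to 0,
\]
which has the shape $0 \to \Z \to \Z \to \Z \to \Z \to 0$. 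Any surjection $\Z \to \Z$ is an isomorphism, so $\partial$ is an isomorphism; by exactness the middle arrow then vanishes, $\iota_*$ is therefore surjective, and since it is already injective it is the desired isomorphism.

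The main obstacle I anticipate is that one has no diffeomorphism $W \cong W_{\st}$ at hand, only a homology-level hypothesis, so the ``correct'' generator in $H_{2d+1}(W)$ is not a priori tethered to anything geometric. The bridge is the radial retraction of the shell onto $M$, which identifies the evidently homologically-generating zero section $S$ of $W_0$ with the section class $\sigma \in H_{2d+1}(M)$; Poincaré--Lefschetz duality then forces $\iota_*$ to send $\sigma$ to a generator of $H_{2d+1}(W)$.
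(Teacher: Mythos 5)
Your proof is correct and takes essentially the same approach as the paper: excision reduces the claim to showing the inclusion $W_0\hookrightarrow W_1$ is a homology isomorphism, the problem localises to degree $2d+1$, and the decisive step is the identical Poincar\'e--Lefschetz duality computation for the pair $(W,M)$ (equivalently $(W_1,M_1)$) combined with the observation that a surjection $\Z\to\Z$ is an isomorphism. The only difference is cosmetic: you identify the generator of $H_{2d+1}(W_0)$ with the section class $\sigma\in H_{2d+1}(M)$ directly via the radial retraction of the spherical shell onto $M$, whereas the paper routes the same comparison through the homotopy-commutative cobordism diagram involving the sections $S_0,S_1$ and the diffeomorphism $M_0\to M_1$.
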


\begin{proof}[{\bf Proof of Proposition \ref{prop:homologyenough}.}]
With Lemma \ref{lem:hcobor},
Poincar\'e--Lefschetz duality and the universal coefficient theorem
imply $H_*(X, M_1)=0$.
A combination of Hurewicz' and Whitehead's theorem implies
that the inclusions $M_0 \hookrightarrow X$
and $M_1 \hookrightarrow X$ are strong deformation retracts.
This uses our preceding considerations
on the simply connectedness of $M_0$, $M_1$, and $X$.
In other words $\{M_0,X,M_1\}$ is a simply connected $h$-cobordism
of dimension $4d+2\geq6$.

The $h$-cobordism theorem implies that
$X$ is diffeomorphic to $[0,1]\times M_0$
via a diffeomorphism that restricts to the identity
on $0\times M_0$, see \cite{miln65}.
Gluing back $W_0$ to $X$ and extending the diffeomorphism suitably
we obtain that $W_1$ is diffeomorphic to a collar extension of $W_0$.
It follows that $W$ and $W_{\!\st}$ are diffeomorphic. 
\end{proof}

%%%%%%%%%%%%%%%%%%%%%%%%%%%%%%%%%%%%%%%%

\subsection{Killing relative homology}

It remains to prove Lemma \ref{lem:hcobor}.
By the excision axiom this lemma is equivalent
to $H_*(W_1,W_0)=0$.

To show homology vanishing of the pair $(W_1,W_0)$
we observe that $S^{2d+1}\simeq W_0$,
so that, invoking our assumptions,
$H_kS^{2d+1}=H_kW_0=H_kW_1$ for all $k\in\Z$.
The long exact sequence of $(W_1,W_0)$
implies $H_k(W_1,W_0)=0$ for all $k\neq2d+1,2d+2$
and exactness of
\[
0\ra
H_{2d+2}(W_1,W_0)\ra
H_{2d+1}W_0\ra
H_{2d+1}W_1\ra
H_{2d+1}(W_1,W_0)\ra
0\,.
\]
Therefore, in order to prove 
Proposition \ref{prop:homologyenough}
it suffices to show that the map
$\iota_*\co H_{2d+1}W_0\ra H_{2d+1}W_1$
induced by the inclusion $W_0\subset W_1$
is invertible.

We will use the {\bf cobordism diagram} below.
To explain the cobordism diagram
we consider the shift map that brings $W_{\!\st}$ to $W_0$
in the construction we gave
in Section \ref{subsec:acobordism}.
The image of the section $s$,
which is contained in $\partial W_{\!\st}$,
is mapped to $S_0\subset M_0$ under the shift map.
Also, with the construction of $X$
we find a diffeomorphism $M_0\ra M_1$
that maps $S_0$ onto the section $S_1$, say,
such that $S_0$ is isotopic to $S_1$ inside $W_1$.
This isotopy extends to an isotopy of $W_0$ inside $W_1$
resulting in the following homotopy commutative diagram:

\begin{diagram}
S_0&&&\rTo^{\cong}&&& S_1\\
&\rdTo^{\simeq}&&&&\ldTo^{\mathrm{(iii)}}&\\
\dTo^{\mathrm{(i)}}&&W_0&\rTo&W_1&&\dTo_{\mathrm{(ii)}}\\
&\ruTo^{\mathrm{(i)}}&&&&\luTo^{\mathrm{(iii)}}&\\
M_0&&&\rTo^{\cong}&&&M_1
\end{diagram}
\vspace{0.4cm}

\noindent
We would like to understand the diagram in homology of degree $2d+1$.
Starting with the left triangle
we see that the inclusion of the $(2d+1)$-sphere $S_0$ into $W_0$
is a homotopy equivalence.
Using the Gysin sequence of the Euler class zero
bundle $ST^*S^{2d+1}\ra S^{2d+1}$ and the universal coefficient theorem
we see that the homology of $M_0$ in degree $2d+1$ equals $\Z$. 
Therefore,
all morphisms of the left triangle in degree $2d+1$
are multiplication by $\pm1$,
i.e.\ the maps (i) are $H_{2d+1}$-isomorphisms.

Using the outer square we see that
the map (ii) is a $H_{2d+1}$-isomorphism too.
We claim that the maps (iii) are $H_{2d+1}$-isomorphisms.
Our assumptions on the homology type of $W$
translate into $H_*W_1=H_*W_0$.
Poincar\'e--Lefschetz duality and the universal coefficient theorem imply
$H_{2d+2}(W_1, M_1)=0$
and $H_{2d+1}(W_1, M_1)=\Z$.
Therefore,
the long exact sequence for the pair $(W_1, M_1)$
implies exactness of 
\[
0
\ra H_{2d+1}M_1
\ra H_{2d+1}W_1
\ra\Z
\ra\Z
\ra 0
\,,
\]
where we used that $H_{2d}M_1=\Z$
by the Gysin sequence of the Euler class zero
bundle $ST^*S^{2d+1}\ra S^{2d+1}$ and the universal coefficient theorem.
As any surjection $\Z\ra\Z$ is necessarily invertible,
$H_{2d+1}M_1\ra H_{2d+1}W_1$
is indeed an isomorphism.
Therefore,
the maps labeled (iii) are isomorphisms in $H_{2d+1}$
as claimed.

Therefore,
all morphisms in the cobordism diagram
are $H_{2d+1}$-isomorphisms.
In particular, 
$\iota_*\co H_{2d+1}W_0\ra H_{2d+1}W_1$
is invertible.
This completes the proof of Lemma \ref{lem:hcobor}.

%%%%%%%%%%%%%%%%%%%%%%%%%%%%%%%%%%%%%%%%
%%%%%%%%%%%%%%%%%%%%%%%%%%%%%%%%%%%%%%%%

\section{Holomorphic filling via cap constructions \label{sec:holfillviacap}}

Cap constructions were used in \cite{geizeh12,sz17}
to verify instances of the Weinstein conjecture,
and in \cite{bgz} for classifications of subcritical fillings.
In the critical case too, symplectic caps are essential.

%%%%%%%%%%%%%%%%%%%%%%%%%%%%%%%%%%%%%%%%

\subsection{The anti-Hopf map and capping\label{sec:antihopf}}

The unit sphere
$S^{2d+1}\subset\C^{d+1}$, $d \geq 1$,
fibres over $\C P^d$ via the Hopf map
$S^{2d+1}\ra\C P^d$, $z\mapsto [z]$.
As observed by Audin--Lalonde--Polterovich (\cite{alp94})
the graph of the anti-Hopf map 
\[
S^{2d+1}\lra\C^{d+1}\times\C P^d\,,
\quad z\longmapsto (z, [\bar z])\,,
\]
defines a Lagrangian embedding,
which by the Weinstein neighbourhood theorem
extends to a symplectic embedding
of a neighbourhood of the zero section
of the cotangent bundle $T^*S^{2d+1}$. 
Rescaling the metric on $S^{2d+1}$ suitably,
this defines an embedding of $DT^*S^{2d+1}$
into $\C^{d+1} \times \C P^d$, and
we may assume that
\[
(M,\xi)=ST^*S^{2d+1}\subset\C^{d+1}\times\C P^d
\]
appears as a hypersurface of contact type.
The complement of the interior of $DT^*S^{2d+1}$
is a so-called {\bf symplectic cap}
\[
\cp:= \C^{d+1}\times\C P^d\setminus
\Int\big(DT^*S^{2d+1}\big)
\]
and carries the symplectic form
\[
\omega_{\cp}:=\rmd x\wedge\rmd y\oplus\omega_{\fs}\,.
\]
We define a symplectic manifold
\[
(Z,\Omega):=
(W,\omega)\cup_{(M, \xi)}
\big(\cp,\omega_{\cp}\big)
\]
via gluing along contact type boundaries.
Choose $z_0\in\C$ such that
the complex hypersurface
$z_0\times\C^d\times\C P^d$ is contained in $Z$.

\begin{lem}
\label{lem:surjectiveimpliesdiffeo}
If the embedding
$z_0\times\C^d\times\C P^d\ra Z$
is $\pi_1$-surjective and $H_k(\,.\,;\F)$-onto for all $k\in\Z$
and all fields $\F$,
then $W$ and $W_{\!\st}$ are diffeomorphic.
\end{lem}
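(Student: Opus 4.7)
The plan is to reduce to Proposition \ref{prop:homologyenough}: I need to check that $W$ is simply connected and that $H_*W\cong H_*W_{\!\st}$. Write $H:=z_0\times\C^d\times\C P^d\simeq\C P^d$ and let $i\co H\hookrightarrow Z$ denote the embedding from the hypothesis.

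For simple connectivity I would argue first that $\cp$ is simply connected: it is obtained from the simply connected manifold $\C^{d+1}\times\C P^d$ by removing the interior of a tubular neighbourhood of the anti-Hopf Lagrangian $S^{2d+1}$, and since that Lagrangian has codimension $2d+1\geq 3$ for $d\geq 1$, a general-position $2$-disc can be made disjoint from it. Because $M=ST^*S^{2d+1}$ is also simply connected (by the homotopy sequence of the sphere bundle), Seifert--van Kampen applied to $Z=W\cup_M\cp$ yields $\pi_1 Z\cong\pi_1 W*\pi_1\cp=\pi_1 W$. The $\pi_1$-surjectivity of $i$, together with $\pi_1 H=0$, then forces $\pi_1 W=\pi_1 Z=0$.

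Next I would determine $H_*Z$. The $H_*(\,;\F)$-surjectivity of $i_*$ bounds $\dim H_k(Z;\F)$ above by $\dim H_k(\C P^d;\F)$ for every field $\F$. Symplectic positivity provides the matching lower bound in even degrees: each complex submanifold $\C P^k\subset H$ has positive $(i^*\Omega)^k$-area, so $\langle[\Omega]^k,i_*[\C P^k]\rangle_Z>0$ and hence $i_*[\C P^k]\neq 0$ in $H_{2k}(Z;\R)$. Running over all fields, these two bounds pin down $H_*(Z;\F)\cong H_*(\C P^d;\F)$ for every $\F$, and the universal coefficient theorem then upgrades this to $H_*Z\cong H_*\C P^d$. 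In particular $H_*Z$ agrees with the homology of the standard glueing $Z_{\st}:=W_{\!\st}\cup_M\cp=\C^{d+1}\times\C P^d$.

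The remaining step is Mayer--Vietoris bookkeeping. Once $H_*M\cong H_*(S^{2d+1}\times S^{2d})$ is computed from the Gysin sequence of the Euler-class-zero bundle $ST^*S^{2d+1}\ra S^{2d+1}$, and $H_*\cp$ is read off once and for all from the standard decomposition of $Z_{\st}$, I would feed these into the Mayer--Vietoris sequence for $Z=W\cup_M\cp$ with the newly determined $H_*Z=H_*\C P^d$. The long exact sequence pins down $H_*W$ degree by degree and forces it to coincide with $H_*W_{\!\st}$ (concentrated as $\Z$ in degrees $0$ and $2d+1$), at which point Proposition \ref{prop:homologyenough} finishes the job. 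The main obstacle is the injectivity portion of the computation of $H_*Z$: without the symplectic area input the hypothesis only provides a surjection from $H_*\C P^d$, and a nontrivial kernel would ripple through Mayer--Vietoris and obstruct the identification $H_*W\cong H_*W_{\!\st}$.
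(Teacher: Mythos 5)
Your reduction to Proposition \ref{prop:homologyenough} and your $\pi_1$-argument coincide with the paper's, as does the observation that surjectivity of $H_*(\C P^d;\F)\to H_*(Z;\F)$ combined with the universal coefficient theorem kills $H_kZ$ for $k\geq 2d+1$. Where you genuinely diverge is in the degrees $k\leq 2d$. You propose to pin down $H_*Z$ completely by supplementing the surjection with a symplectic-area lower bound (the classes $i_*[\C P^k]$ pair nontrivially against $[\Omega]^k$, hence survive in $H_{2k}(Z;\R)$), and then to extract $H_*W$ from a full Mayer--Vietoris computation. This can be made to work, but with two caveats you should make explicit. First, the positivity argument only bounds ranks over $\R$; to promote $H_*(Z;\F)\cong H_*(\C P^d;\F)$ for all fields $\F$ you must combine the rational lower bound with the $\F_p$ upper bounds and argue inductively that torsion cannot appear -- ``running over all fields'' as stated conflates the two. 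Second, Mayer--Vietoris alone does not automatically ``pin down $H_*W$'': you need the vanishing of adjacent $H_*Z$ to force injectivity of the connecting maps, and in degree $2d$ (where $H_{2d}\cp\cong\Z^2$) you need to pass to field coefficients to exclude torsion in $H_{2d}W$. The paper avoids all of this: it never computes $H_kZ$ for $k\leq 2d$ at all, and in particular needs no positivity input. For low degrees it instead applies Poincar\'e--Lefschetz duality $H^kW\cong H_{4d+2-k}(W,M)$, excises to $(Z,\cp)$, and uses the long exact sequence of $(Z,\cp)$ together with the high-degree vanishing of $H_*Z$ to get $H^kW\cong H_{4d+1-k}\cp$, which is manifestly the same for $W$ and $W_{\!\st}$. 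The duality route is shorter and needs only the surjectivity hypothesis; your Mayer--Vietoris route buys a complete description of $H_*Z$ at the cost of an extra geometric input and heavier bookkeeping. Both are correct.
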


\begin{proof}
In view of Proposition \ref{prop:homologyenough}
we need to show that $\pi_1W=1$
and $H_*W=H_*W_{\!\st}$.

By the $\pi_1$-surjectivity assumption,
we have $\pi_1Z=1$,
since $\C P^d$ is simply connected.
As $S^{2d+1}\subset\C^{d+1}\times\C P^d$
has codimension $\geq 3$,
we get that $\cp$
is simply connected.
By the Seifert--van Kampen theorem,
$\pi_1Z$ is the free product of $\pi_1W$ and the trivial group,
because $M$ is simply connected.
It follows that $\pi_1W=1$.

By assumption,
the induced map
$H_*(\C P^d;\F)\ra H_*(Z;\F)$
is surjective for all fields $\F$.
With the universal coefficient theorem,
we find $H_kZ=0$ for all $k\geq 2d+1$.
The Mayer--Vietoris sequence
for the decomposition $Z=W\cup\cp$ along $M$ gives
$H_kM=H_kW \oplus H_k(\cp)$
for all $k \geq 2d+1$.
Similarly,
$H_kM=H_kW_{\!\st} \oplus H_k(\cp)$
for all $k \geq 2d+1$,
so that $H_kW=H_kW_{\!\st}$
for all $k\geq 2d+1$.

To prove the isomorphism
$H_kW=H_kW_{\!\st}$
for all $k\leq2d$,
observe that by excision and Poincar\'e--Lefschetz duality
applied to $(Z,W)$ we have
$H^kW=H_{4d+2-k}(Z,\cp)$ for all $k\in\Z$.
Note that $k\leq2d$ translates into $4d+2-k \geq 2d+2$
and that the long exact sequence of the pair $(Z,\cp)$ gives
$H_m(Z, \cp)=H_{m-1}(\cp)$ for all $m\geq2d+2$.
Hence,
$H^kW=H_{4d+1-k}(\cp)$
for all $k\leq2d$.
Similarly,
$H^kW_{\!\st}=H_{4d+1-k}(\cp)$
for all $k\leq2d$
using excision of the pair
$(\C^{d+1}\times\C P^d,W_{\!\st})$
and Poincar\'e--Lefschetz duality.
We conclude $H^kW=H^kW_{\!\st}$
for all $k\leq2d$,
which in fact holds true in homology
with any field coefficients.
The universal coefficient theorem implies
$H_kW=H_kW_{\!\st}$ for all $k\leq2d$,
as the reduced homology of $S^{2d+1}$
vanishes in all degrees $\leq2d$.
\end{proof}

%%%%%%%%%%%%%%%%%%%%%%%%%%%%%%%%%%%%%%%%

\subsection{A cap construction\label{sec:acapcon}}

As a first step in defining a moduli space of holomorphic spheres
we will partially compactify $\C^{d+1}\times\C P^d$.
The subset
\[
B_r^2(0)\times \C^d \times \C P^d\,,
\quad r > 1\,,
\]
of $\C^{d+1}\times\C P^d$ completes symplectically to
\[
\Big(\C P^1\times\C^d \times\C P^d,
r^2 \omega_{\fs}\oplus\rmd x\wedge\rmd y\oplus\omega_{\fs}\Big)
\]
where $\omega_{\fs}$ stands for the Fubini--Study form
of the respective projective space.
The Lagrangian image of $S^{2d+1}$ under the anti-Hopf map
is contained in $B^2_{r}(0)\times\C^d\times\C P^d$.
By rescaling the round metric on $S^{2d+1}$
as in Section \ref{sec:antihopf},
$(M, \xi)=ST^*S^{2d+1}$ also appears as a hypersurface
of contact type in $\C P^1\times\C^d\times\C P^d$
provided with the symplectic form
\[
\hat{\omega}:=
r^2 \omega_{\fs}\oplus\rmd x\wedge\rmd y\oplus\omega_{\fs}\,.
\]
We equip the complement
\[
\widehat{\cp}:=
\C P^1\times\C^d\times\C P^d \setminus
\Int\big(DT^*S^{2d+1}\big)
\]
with the symplectic form
induced by $\hat{\omega}$
and obtain a symplectic cap.
•$\widehat{\cp}$ contains holomorphic spheres
\[
C_1:=\C P^1\times\pt\times\pt\,,
\qquad
C_2:=\pt\times\pt\times\C P^1\,,
\]
where $\pt$ stands for a generic choice
of a point in the respective factor.
In the definition of $C_2$,
$\C P^1\subset \C P^d$
denotes a complex line.
Dual hypersurfaces are
\[
H_1:=\infty\times\C^d\times\C P^d\,,
\qquad
H_2:=\C P^1\times\C^d\times\C P^{d-1}\,,
\]
where $\C P^{d-1}\subset\C P^d$ denotes a complex hyperplane.
Assuming $\C P^1$ and $\C P^{d-1}$
in general position in $\C P^d$
it follows for the intersection numbers that
\[
C_i\cdot H_j=\delta_{ij}
\]
for $i,j=1,2$.

%%%%%%%%%%%%%%%%%%%%%%%%%%%%%%%%%%%%%%%%

\subsection{A moduli space\label{sec:amodspace}}

We define a symplectic manifold
\[
\big(\hat{Z},\hat{\Omega}\big):=
(W,\omega)\cup_{(M, \xi)}\Big(\widehat{\cp},\hat{\omega}\Big)\,.
\]
It contains a subset $W_{\square}$
that by definition is the union of $W$
and all points in $\widehat{\cp}$ of the form
$(z_1,\bfz,w)$
that satisfy $|z_1|<R$ and $|\bfz|<R$
for some fixed $R\in(1,r)$.
Let $J$ be a tame almost complex structure on $(\hat{Z},\hat{\Omega})$
that is equal to the standard complex structure $J_{\st}$
on $\hat{Z} \setminus W_{\square}$
induced by $\C P^1\times\C^{d}\times\C P^d$.

\begin{lem}
\label{lem:maximumprinciple}
For all non-constant parametrised holomorphic spheres
$C=u(\C P^1)$, $u\co\C P^1\ra(\hat{Z},J)$,
the following hold true.
\begin{enumerate}
\item[(i)]
If $C\subset\hat{Z}\setminus W_{\square}$,
then $u=(u^1,\pt,u^3)$, where $\pt\in\C^{d}$
and $u^1\co\C P^1\ra\C P^1$, $u^3\co\C P^1\ra\C P^d$ are holomorphic.
\item[(ii)]
If $C\cap\big(\hat{Z}\setminus\overline W_{\square}\big)\neq\emptyset$
and $C\cap H_1=\emptyset$,
then $u=(\pt,\pt,u^3)$,
where $u^3: \C P^1 \ra \C P^d$ is holomorphic.
\end{enumerate}
\end{lem}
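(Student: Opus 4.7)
The plan is as follows.

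For (i), on $\hat{Z}\setminus W_{\square}$ the almost complex structure $J$ coincides with the product complex structure $J_{\st}$ on $\C P^1\times\C^d\times\C P^d$. Hence a $J$-holomorphic map $u\co\C P^1\to\hat{Z}\setminus W_{\square}$ decomposes componentwise as $u=(u^1,u^2,u^3)$ with all three components separately holomorphic. The middle component is a holomorphic map from a compact Riemann surface into the affine space $\C^d$ and is therefore constant by Liouville's theorem, yielding $u=(u^1,\pt,u^3)$ with $u^1\co\C P^1\to\C P^1$ and $u^3\co\C P^1\to\C P^d$ holomorphic.

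For (ii), let $V:=u^{-1}(\hat{Z}\setminus\overline{W_{\square}})$, which is a non-empty open subset of $\C P^1$ by hypothesis. The plan is first to prove $V=\C P^1$; once this is established, the argument of (i) applies on all of $\C P^1$, and the hypothesis $C\cap H_1=\emptyset$ forces $u^1$ to avoid $\infty$, turning it into a holomorphic map $\C P^1\to\C$, which is then also constant by Liouville's theorem. This yields $u=(\pt,\pt,u^3)$ with $u^3$ holomorphic, as desired.

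To show $V=\C P^1$, I argue by contradiction. Suppose some connected component $V_{\alpha}$ of $V$ is a proper subset of $\C P^1$. Then $\partial V_{\alpha}$ is non-empty, and by continuity $u$ maps $\partial V_{\alpha}$ into $\partial W_{\square}$. Since $W$ sits in the interior of $W_{\square}$, this topological boundary lies entirely on the cap side and equals the level set $\{\max(|z_1|,|\bfz|)=R\}$; in particular $|u^1|,|u^2|\leq R$ on $\partial V_{\alpha}$. On $V_{\alpha}$ itself the strict inequality $\max(|u^1|,|u^2|)>R$ holds, because $u$ lies in the strict complement of $\overline{W_{\square}}$ there. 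Fix $q_0\in V_{\alpha}$, and after interchanging $u^1$ and $u^2$ if necessary, assume $|u^1(q_0)|>R$. Since $u$ is $J_{\st}$-holomorphic on $V_{\alpha}$ and avoids $H_1$, the map $u^1\co V_{\alpha}\to\C$ is a well-defined holomorphic function, so $|u^1|^2$ is subharmonic. It takes a value strictly greater than $R^2$ at the interior point $q_0$ yet is bounded above by $R^2$ on $\partial V_{\alpha}$; hence its maximum on the compact set $\overline{V_{\alpha}}$ is attained in the interior, so by the strong maximum principle $|u^1|^2$ is constant on $V_{\alpha}$. By continuity the same value $>R^2$ is attained on $\partial V_{\alpha}$, contradicting the boundary bound. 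If instead $|u^2(q_0)|>R$, the same argument applies to $|u^2|^2=\sum_j|u^2_j|^2$, a sum of subharmonic functions.

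The main obstacle is this maximum-principle step of part (ii): one must pit the strict inequality $\max(|u^1|,|u^2|)>R$ in the interior of $V$ against the sharp equality $\max(|u^1|,|u^2|)=R$ on $\partial W_{\square}$. The hypothesis $C\cap H_1=\emptyset$ is essential here, because it is precisely what allows $u^1$ to be regarded as a holomorphic function into $\C$ rather than into $\C P^1$, so that $|u^1|^2$ is a well-defined subharmonic function on $V_{\alpha}$ to which the strong maximum principle applies.
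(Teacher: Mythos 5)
Your proof is correct and follows essentially the same route as the paper's: both arguments apply the strong maximum principle to the subharmonic radial functions $|u^1|^2$ (valid since $C\cap H_1=\emptyset$ makes $u^1$ a genuine $\C$-valued function) and $|u^2|^2$ in the region where $J=J_{\st}$, in order to force $C\subset\hat{Z}\setminus W_{\square}$, and then reduce to part (i) together with $C\cap H_1=\emptyset$ to conclude that the $\C P^1$-component is constant. The paper phrases the conclusion of the maximum-principle step as the projections lying in level sets of the radial distance functions, while you phrase it as a direct contradiction showing $u^{-1}(\hat{Z}\setminus\overline{W_{\square}})=\C P^1$, but the underlying idea is the same.
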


\begin{proof}
 (i) follows with the maximum principle.
 (ii) holds because the maximum principle
 in $\C$- and in $\C^d$-direction implies
 that the respective projections of $C$ {\it a posteriori}
 are contained in the level sets of the radial distance functions.
 Hence, $C\subset\hat{Z}\setminus W_{\square}$ and
 the claim follows with (i).
\end{proof}

The {\bf moduli space} $\MM$
consists of all holomorphic maps
$u\co\C P^1\ra(\hat{Z},J)$
such that $u(\C P^1)$ is homologous to $C_1$
and $u(z)\in z\times\C^d\times\C P^d$
for $z\in\{\pm\varrho,\infty\}$, $\varrho\in(R,r)$.

\begin{rem}
\label{rem:firstprop}
We observe:
\begin{enumerate}
\item [(i)]
  All holomorphic spheres $u\in\MM$ are simple
  because positivity of intersections implies $u\bullet H_1=C_1\cdot H_1=1$,
  see \cite{geizeh12,mcsa04}.
\item [(ii)]
  The index
  $2\dim_{\C}(\hat{Z})+2c_1(C_1)-6$
  of the underlying Fredholm problem equals $4d$,
  because
  $T\hat{Z}|_{C_1}=T\C P^1\oplus\underline{\C}^d\oplus\underline{\C}^d$
  and, hence, $c_1(C_1)=2$.
\item[(iii)]
  If $u(\C P^1)\subset\hat{Z}\setminus W_{\square}$
  for $u\in\MM$, then $u=(\id,\pt,\pt)$.
  In particular $u$ is regular.
  Indeed, 
  with Lemma \ref{lem:maximumprinciple} (i)
  we get $u=(u^1,\pt,u^3)$ for holomorphic spheres $u^1$ and $u^3$
  in $\C P^1$ and $\C P^d$, resp.
  Because of $u\bullet H_1=1$
  the holomorphic map $u^1$
  is an automorphism of $\C P^1$
  that fixes $3$ points,
  i.e.\ $u^1=\id$.
  Therefore,
  \[
  r^2\pi=
  \int_{\C P^1}u^*\hat{\Omega}=
  r^2\pi+\int_{\C P^1}(u^3)^*\omega_{\fs}
  \,.
  \]
  The second integral is the Dirichlet energy of $u^3$,
  which has to vanish by the equation.
  It follows that $u^3$ is constant.
\item[(iv)]
  We choose the almost complex structure $J$ on $\hat{Z}$
  such that $u\in\MM$ is regular
  whenever $u(\C P^1)\cap W_{\square}\neq\emptyset$.
  This can be done for example by perturbing $J$ on $W_{\square}$ suitably,
  see \cite{geizeh12,mcsa04}.
  With these generic choices
  the moduli space $\MM$ is a smooth orientable manifold
  of dimension $4d$.
\item[(v)]
  By the maximum principle in $\C^d$-direction 
  as used in Lemma \ref{lem:maximumprinciple} (ii)
  and item (iii) above,
  the end of $\MM$ is naturally diffeomorphic to
  the model
  $\big(\C^d\setminus B^{2d}_R(0)\big)\times\C P^d$
  via the correspondence between
  \[
   \Big\{u\in\MM\;\Big|\;u(\C P^1)\cap
   \big\{(z_1,\bfz,w)\in\widehat{\cp}\;\big|\;|\bfz|\geq R\big\}
   \neq\emptyset\Big\}
  \]
  and
  \[
   \big\{u=(\id,\bfz,w)\;\big|\;\bfz \in \C^d,\; |\bfz|\geq R,\;w\in\C P^d\big\}\,.
  \]
  We choose the orientation on $\MM$
  so that this correspondence preserves the orientations.
\end{enumerate}
\end{rem}

\begin{lem}
\label{lem:properthandiffeo}
If the evaluation map
$\widehat{\ev}\co\MM\times\C P^1\ra\hat{Z}$, $(u, z)\mapsto u(z)$
is proper,
then $W$ and $W_{\!\st}$ are diffeomorphic.
\end{lem}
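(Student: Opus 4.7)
The plan is to derive both hypotheses of Lemma~\ref{lem:surjectiveimpliesdiffeo} from the mapping degree of the proper evaluation, keeping the argument inside $Z$. The key geometric observation is that every moduli sphere $u\in\MM$ meets $H_1$ only at the constrained point $z=\infty$ and the slice $z_0\times\C^d\times\C P^d$ at a unique point with $\C$-coordinate, so evaluations and straight-line homotopies parametrized by $\C$ stay inside $Z$.

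Under the properness hypothesis, $\widehat{\ev}\co\MM\times\C P^1\ra\hat Z$ is a proper smooth map between oriented manifolds of dimension $4d+2$ by Remark~\ref{rem:firstprop}(ii) and~(iv), and Remark~\ref{rem:firstprop}(v) identifies it at infinity with the orientation-preserving inclusion $\C P^1\times(\C^d\setminus\overline B^{2d}_R(0))\times\C P^d\hookrightarrow\widehat{\cp}$, so $\deg\widehat{\ev}=+1$. Because $u\bullet H_1=[C_1]\cdot[H_1]=1$ and the constraint forces $u(\infty)\in H_1$, the preimage $u^{-1}(H_1)$ is exactly $\{\infty\}$; consequently $\widehat{\ev}$ sends $\MM\times\C$ into $Z$, and the preimage under $\widehat{\ev}$ of any compact subset of $Z$ is disjoint from $\MM\times\{\infty\}$. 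Hence the restriction $\widehat{\ev}|_{\MM\times\C}\co\MM\times\C\ra Z$ is a proper degree-$1$ map between connected oriented manifolds of dimension $4d+2$, and a standard argument (cover-lifting on $\pi_1$; Poincar\'e--Lefschetz duality with compact supports on $H_*(-;\F)$) yields surjections on $\pi_1$ and on $H_*(-;\F)$ for every field $\F$. Retracting the contractible $\C$-factor identifies the induced maps with those of the evaluation $e_0\co\MM\ra Z$, $u\mapsto u(0)$.

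Positivity of intersections applied to $[u(\C P^1)]\cdot[z_0\times\C^d\times\C P^d]=[C_1]\cdot[H_1]=1$ gives a unique transverse intersection $u(z(u))$ with $z(u)\in\C$, since the slice is disjoint from $H_1$ and hence $z(u)\neq\infty$. This defines a smooth map $\psi\co\MM\ra z_0\times\C^d\times\C P^d$, and the straight-line homotopy $(u,s)\mapsto u\bigl(s\,z(u)\bigr)$ stays inside $Z$ throughout because $s\,z(u)\in\C$ for all $s\in[0,1]$; thus $e_0\simeq\iota\circ\psi$, where $\iota$ denotes the slice inclusion. Therefore $\iota_*\circ\psi_*=(e_0)_*$ is surjective on $\pi_1$ and on $H_*(-;\F)$, forcing $\iota_*$ itself to be surjective on $\pi_1$ and on $H_*(-;\F)$ for every field $\F$. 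Applying Lemma~\ref{lem:surjectiveimpliesdiffeo} concludes $W\cong W_{\!\st}$.

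The main obstacle in this plan is keeping the whole argument inside $Z$ rather than $\hat Z$: the natural degree-$1$ surjectivity statement lives in $\hat Z$, but Lemma~\ref{lem:surjectiveimpliesdiffeo} requires the slice inclusion into $Z$. Bridging the two is made possible by the intersection control $u^{-1}(H_1)=\{\infty\}$ together with $z(u)\in\C$, which guarantees that both $e_0$ and the homotopy $e_0\simeq\iota\circ\psi$ avoid $H_1=\hat Z\setminus Z$ entirely.
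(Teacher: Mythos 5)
Your proposal is correct and follows essentially the same route as the paper: restrict $\widehat{\ev}$ to $\MM\times\C\to Z$ using $u\bullet H_1=1$, conclude that this proper degree-$1$ map is surjective on $\pi_1$ and $H_*(-;\F)$ via Poincar\'e duality and a covering argument, and then transfer surjectivity to the slice inclusion so that Lemma~\ref{lem:surjectiveimpliesdiffeo} applies. The only cosmetic difference is that you build the map $\psi\co\MM\to z_0\times\C^d\times\C P^d$ via positivity of intersections and a straight-line homotopy in the $\C$-coordinate, whereas the paper reads off $u(\varrho)\in\varrho\times\C^d\times\C P^d$ directly from the marked-point constraint in the definition of $\MM$ and uses the homotopy equivalence $\MM\times\varrho\hookrightarrow\MM\times\C$ in a commutative square; for $z_0=\varrho$ your $\psi$ coincides with the paper's map.
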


\begin{proof}
By definition of $\MM$,
each holomorphic sphere $u(\C P^1)$, $u\in\MM$,
intersects $H_1$ at $\infty\in\C P^1$.
This intersection is unique by positivity of intersections.
Therefore,
the evaluation map $\widehat{\ev}$ restricts to 
an evaluation map on $\MM\times\C$
that takes values in $Z=\hat{Z}\setminus H_1$,
i.e.\ $(u, z)\mapsto u(z)$ is a well-defined map
$\ev\co\MM\times\C\ra Z$.

As $\widehat{\ev}$ is proper,
so is the restricted map $\ev$,
hence $\ev$ is of mapping degree $1$,
see Remark \ref{rem:firstprop} (v).
Using the Umkehr-homomorphism based on Poincar\'e duality
involving compactly supported cohomology,
we see that $\ev$ is $H_*(\,.\,,\F)$-surjective for all fields $\F$.
Surjectivity of $\ev$ in $\pi_1$ follows with a covering
that corresponds to the index of the $\ev_{\#}$-image in $\pi_1(Z)$. 

In view of Lemma \ref{lem:surjectiveimpliesdiffeo}
we consider the following commutative diagram:
\begin{diagram}
\MM\times\C&&\rTo^{\ev}&&Z\\
\uTo^{\simeq}&&&&\uTo^{\mathrm{\iota}}\\
\MM\times\varrho&&\rTo^{\ev}&&\varrho\times\C^d\times\C P^d
\end{diagram}
\vspace{0.2cm}

\noindent
The induced diagram in $\pi_1$ and $H_*(\,.\,,\F)$
is commutative with upper $\ev_{\#}$ and $\ev_*$ surjective
for all fields $\F$.
Hence,
$\iota_{\#}$ and $\iota_*$ are surjective.
\end{proof}

%%%%%%%%%%%%%%%%%%%%%%%%%%%%%%%%%%%%%%%%

\subsection{Properness of the evaluation map\label{sec:properev}}

We will investigate properness of the evaluation map.
The results of this section are part of the proof
of Theorem \ref{thm:installabilitytheorem}.
For basics about holomorphic spheres we refer to
McDuff--Salamon's book \cite{mcsa04}.

\subsubsection{Gromov convergence}
By the description of the end of $\MM$
in Remark \ref{rem:firstprop} (v)
it suffices to consider sequences $\{u^{\nu}\}_{\nu}$
of holomorphic spheres $u^{\nu}\in\MM$
whose images $u^{\nu}(\C P^1)$
are contained in
\[
W\cup
\Big\{(z_1,\bfz,w)\in\widehat{\cp}\;\big|\;|\bfz|\leq R\Big\}\,.
\]
Because the energy
\[
E(u^{\nu})
=\int_{\C P^1}(u^{\nu})^*\hat{\Omega}
=\pi r^2 
\]
is uniformly bounded for all $\nu\in\N$,
Gromov's compactness theorem applies:
$u^{\nu}$ converges to a stable holomorphic sphere
$\bfu$ with three marked points fixed.
We denote by $u_1,\dots,u_N$, $N\geq1$,
the non-constant components of $\bfu$.
The convergence is in $C^{\infty}$ precisely if $N=1$.
Therefore,
the evaluation map $\widehat{\ev}$ is proper
if $N=1$.

\subsubsection{Intersections}
\label{intersectionpattern}
The intersection number of $u^{\nu}$ and $H_1$
is $u^{\nu}\bullet H_1=1$ for all $\nu\in\N$.
Furthermore a neighbourhood of $H_1$
is foliated by complex hypersurfaces
of the form $z_1\times\C^d\times\C P^d$
with $|z_1|$ sufficiently large.
Hence, we can assume that
none of the $u_j(\C P^1)$, $j=1,\dots,N$,
lies entirely in $H_1$,
so that the intersection numbers
of the bubbles $u_j$ and $H_1$
are defined, non-negative by positivity of intersections,
i.e.\ $u_j\bullet H_1\geq 0$ for all $j=1,\ldots,N$,
and sum up to
$\sum_{j=1}^N u_j\bullet H_1=1$
according to Gromov convergence.
Relabelling indices, if necessary, yields
\[
u_1\bullet H_1=1
\qquad
\text{and}
\qquad
u_j\bullet H_1=0
\]
for all $j\geq 2$.
By positivity of intersections
and the maximum principle
this implies that for each $j\geq 2$
we have the following alternative:
either
$u_j(\C P^1)\subset W_{\square}$
or $u_j=(\pt,\pt,u^3_j)$ for some
holomorphic $u^3_j: \C P^1 \ra \C P^d$,
see Lemma \ref{lem:maximumprinciple} (ii).
In particular,
\[
u_j(\C P^1)\subset Z_R\,,
\]
for all $j\geq 2$,
where
\[
Z_R:=W\cup
\Big\{(z_1,\bfz,w)\in\cp\big|\;|\bfz|\leq R\Big\}\,.
\]
This will be used in the following section.

\subsubsection{Implementing asphericity}
\label{posofenergy}
We consider the homology classes
$[u_1]\in H_2\hat{Z}$
and
$[u_j]\in H_2Z_R$, $j\geq 2$,
of the bubble spheres.
Because the contact hypersurface $M=ST^*S^{2d+1}$
is simply connected, we can decompose
\[
[u_1]=
S_1+A_1
\in H_2W\oplus H_2\widehat{\cp}
\]
and
\[
[u_j]=
S_j+A_j
\in H_2W\oplus H_2\big(Z_R\setminus\Int W\big)
\,,
\]
for all $j\geq 2$,
using a general position argument:
The bubble spheres intersect $M$ along closed loops.
These loops appear as the boundary of disc maps in $M$
for both orientations of the loops.
Adding the disc maps
to the disc parts of the bubble spheres corresponding to $W$
and its complements -- respecting orientations -- 
yields sphere maps that represent the
$S_j$ and $A_j$ as desired.
Since $(W,\omega)$ is symplectically aspherical,
$\hat{\Omega}(S_j)=0$ for all $j=1,\dots,N$.
Consequently,
using Stokes' theorem, we have
\[
0<E(u_j)=\hat{\Omega}(A_j)
\]
for all $j=1,\dots,N$.

\subsubsection{Enumeration}
\label{subsubsec:enum}
In order to compute
the symplectic energy of the holomorphic spheres $u_j$
it suffices to compute $\hat{\Omega}(A_j)$
for the homology $2$-classes $A_j$ in $\widehat{\cp}$,
see \ref{posofenergy}.
Invoking the intersection pattern
from \ref{intersectionpattern}
we find integers $\ell_1,\ldots,\ell_N$
such that
\[
A_1=[C_1]+\ell_1[C_2]
\qquad
\text{and}
\qquad
A_j=\ell_j[C_2]\,,
\]
for all $j\geq 2$.
Indeed, reading this as an equation in $H_2\widehat{\cp}$
this equation holds literally true if $d\geq2$;
if $d=1$, the equation holds up to
possibly adding a multiple of the fibre class
in $M=ST^*S^3\cong S^3\times S^2$.
However,
any representing fibre $S^2$
is disjoint from the complex hypersurface $H_1$
and does not contribute to the symplectic energy
by local exactness of the symplectic form near $M$.
For that reason we will not spell out the fibre class summands.

Positivity of energy from \ref{posofenergy}
yields $r^2+\ell_1 > 0$ and $\ell_j >0$ for all $j\geq 2$.
Furthermore,
using $\hat{\Omega}(A_j)=\hat{\omega}(A_j)$ for all $j$,
\[
\pi r^2=
E(u^{\nu})=
\sum_{j=1}^N\hat{\Omega}(A_j)=
\pi r^2+\ell_1\pi+\ell_2\pi+\cdots+\ell_N\pi
\,.
\]
It follows that
$\ell_1+\cdots+\ell_N=0$.
Therefore,
\[
N-1\leq
\ell_2+\cdots+\ell_N=
-\ell_1< 
r^2
\,.
\]
Taking $r\in (1,\sqrt{2}]$ in Section \ref{sec:acapcon},
the cases $N=1$ and $N=2$ remain.
If $N=1$, then the evaluation map $\widehat{\ev}$
will be proper.
If $N=2$, the above inequality turns into
$1=N-1\leq\ell_2=-\ell_1<2$
so that $\ell_2=-\ell_1=1$,
i.e.\
\[
A_1=[C_1]-[C_2]
\qquad
\text{and}
\qquad
A_2=[C_2]\,.
\]
The symplectic energies of
the two bubble spheres satisfy
\[
E(u_1)=(r^2 -1)\pi
\qquad
\text{and}
\qquad
E(u_2)=\pi\,.
\]

%%%%%%%%%%%%%%%%%%%%%%%%%%%%%%%%%%%%%%%%
%%%%%%%%%%%%%%%%%%%%%%%%%%%%%%%%%%%%%%%%

\section{Fittings\label{sec:installability}}

Recall the cap construction from Section \ref{sec:antihopf},
that
\[
(Z,\Omega)=
(W,\omega)\cup_{(M, \xi)}
\big(\cp,\omega_{\cp}\big)
\,,
\]
and consider the complex hypersurface
$H:=\C^{d+1}\times\C P^{d-1}$
in $\C^{d+1}\times\C P^d$.

%%%%%%%%%%%%%%%%%%%%%%%%%%%%%%%%%%%%%%%%

\subsection{Definition\label{sec:definition}}

A symplectic filling $(W,\omega)$
of $(M,\xi)=ST^*S^{2d+1}$ is called a {\bf fitting}
if there exist a closed neighbourhood $U$ of $H$
in $\C^{d+1}\times\C P^d$ and
an embedding $\varphi\co U\ra Z$
with $\varphi|_{U\cap\cp}=\id$
such that
$\varphi_*J_{\st}$ is tamed by $\Omega$ on $\varphi(U)$
and the homological intersection
\[
\varphi_*[H\cap DT^*S^{2d+1}]\cdot S=0
\]
for all spherical classes $S\in H_2W$,
i.e.\ for all classes in the image of the Hurewicz homomorphism.

%%%%%%%%%%%%%%%%%%%%%%%%%%%%%%%%%%%%%%%%

\subsection{Proof of the Theorem
\ref{thm:installabilitytheorem}\label{sec:pfofthm}}

We show that $W$ and $W_{\!\st}$ are diffeomorphic
provided that $(W,\omega)$
is a symplectically aspherical fitting of $(M,\xi)$.
We consider $(\hat{Z},\hat{\Omega},J)$
constructed in Section \ref{sec:amodspace} 
and choose $r\in(1,\sqrt2)$,
see \ref{subsubsec:enum}.
The embedding $\varphi$ extends to an embedding
(again denoted by $\varphi$)
of a neighbourhood of $H_2=\C P^1\times\C^d\times\C P^{d-1}$ in
$\C P^1\times\C^d\times\C P^d$ into $\hat{Z}$.

This extended embedding can be assumed to be holomorphic:
The almost complex structure $J$ on $\hat{Z}$
equals the standard complex structure $J_{\st}$ on
$\hat{Z}\setminus W_{\square}$. 
We require $J$ to be $\varphi_*J_{\st}$ in $\varphi(U)$ and
perturb $J$ on $W_{\square}\setminus\varphi(U)$
to be generic for all holomorphic spheres
that intersect the open set $W_{\square}\setminus\varphi(U)$ non-trivially.

In order to guarantee transversality of the moduli space $\MM$
we need to verify regularity for all $u\in\MM$
with $u(\C P^1)\subset\varphi(U)$.
For those $u$, we obtain a holomorphic map
\[
\varphi^{-1}\circ u\co\C P^1\lra\C P^1\times\C^d\times\C P^d
\,,
\]
which by the maximum principle can be written as
\[
\varphi^{-1}\circ u=(k,\pt,h)
\]
for holomorphic sphere maps $k$ into $\C P^1$
and $h$ into $\C P^d$.
It suffices to show
that $k$ is an automorphism of $\C P^1$
(and hence $k=\id$ by the definition of $\MM$)
and that $h$ is constant:
We compute symplectic energies
of $\varphi^{-1}\circ u$
with respect to the possibly different symplectic forms
$\varphi^*\hat{\Omega}$ and $\hat{\omega}$.
The symplectic energy of $u$ equals
\[
\pi r^2=\hat{\Omega}([u])=\hat{\Omega}(A)
\]
by asphericity
of $\big(\varphi(U)\cap W,\omega\big)$
and a splitting
$[u]=S+ A\in H_2W\oplus H_2\widehat{\cp}$,
see \ref{posofenergy}.
Because $\hat{\Omega}$ and $\hat{\omega}$
coincide on $\widehat{\cp}$, this reads as
$\pi r^2=\hat{\omega}(A)$.
On the other hand,
the symplectic energy of
$\varphi^{-1}\circ u$ with respect to $\hat{\omega}$
equals
\[
\hat{\omega}\big([\varphi^{-1}\circ u]\big)=
\hat{\omega}(A)
\,,
\]
because $[\varphi^{-1}\circ u]=(\varphi^{-1})_*S+ A$
in $H_2W_{\!\st}\oplus H_2\widehat{\cp}$.
This is because $\varphi|_{U\cap\widehat{\cp}}=\id$
and $W_{\!\st}=DT^*S^{2d+1}$ has vanishing
second homology.
We see that
\[
\pi r^2=
\hat{\omega}(A)=
r^2\int_{\C P^1}k^*\omega_{\fs}+\int_{\C P^1}h^*\omega_{\fs}=
a\pi r^2+b\pi
\]
for some non-negative integers $a,b$.
It follows that
\[
r^2(1-a)=b
\,.
\]
Since $1<r^2<2$ is not an integer
$1-a$ must vanish,
so that $a=1$ and $b=0$.
This implies that the energy of $k$ is $\pi r^2$,
i.e.\ $k$ is an automorphism on $(\C P^1,r^2\omega_{\fs})$,
and that $h$ has zero energy and, hence, must be constant.
Therefore,
$u$ is of the form $u=(\id,\pt,\pt)$ and,
in particular, $u$ is regular. 

To finish the proof of Theorem \ref{thm:installabilitytheorem}
it remains to show properness of the evaluation map $\widehat{\ev}$,
see Lemma \ref{lem:properthandiffeo}.
In view of Section \ref{sec:properev}
we consider the case $N=2$
and $u_1$ as described.
Suppose that $u_1(\C P^1)$
is not contained in the complex hypersurface
$\varphi(H_2)\subset\hat{Z}$.
By positivity of intersections, the intersection number
\[
\varphi(H_2)\bullet u_1\geq 0
\]
is defined and non-negative.
On the other hand, $\varphi(H_2)\bullet u_1$
is equal to the homological intersection
of $[\varphi(H_2)]=\varphi_*[H_2]$ with $[u_1]$.
As in \ref{subsubsec:enum}
we use $A_1=[C_1]-[C_2]$ for the second summand
of the decomposition $[u_1]=S_1+A_1$.
If $d\geq2$ such an equation is literally true;
the second homology of $\widehat{\cp}$
is the one of $\C P^1\times\C^d\times\C P^d$
by general position.
Hence,
$[u_1]=S_1+[C_1]-[C_2]$.
The homological intersection condition
in the definition of a fitting yields
$\varphi_*[H_2]\cdot S_1=0$ as $S_1$ is spherical;
the duality relations in Section \ref{sec:acapcon} yield
\[
\varphi_*[H_2]\cdot [C_1]=0\,,
\qquad
\varphi_*[H_2]\cdot [C_2]=1\,.
\]
It follows that
\[
[\varphi(H_2)]\cdot [u_1]=-1
\]
contradicting positivity of intersections.
If $d=1$, the equation $[u_1]=S_1+[C_1]-[C_2]$
holds up to adding a multiple of the fibre class
in $ST^*S^3\cong S^3\times S^2$.
Again by the definition of a fitting
the fibre class contributes zero
to the intersection with $[\varphi(H_2)]$.
Therefore, we reach the same contradiction.

Consequently,
$u_1(\C P^1)\subset\varphi(H_2)$.
Analysing the energy $E(u_1)=(r^2-1)\pi$
in terms of $\varphi^{-1}\circ u_1=(k,\pt,h)$ as before
we reach the equality
\[
(r^2-1)\pi=a'\pi r^2+ b'\pi
\]
for non-negative integers $a',b'$.
In other words,
\[
r^2(1-a')=1+b'
\]
with $1<r^2<2$.
This is impossible.
Therefore, $N=1$ and $\widehat{\ev}$ is proper.
By Lemma \ref{lem:properthandiffeo}
$W$ is diffeomorphic to $W_{\!\st}$.
\hfill
Q.E.D.
\bigskip

%%%%%%%%%%%%%%%%%%%%%%%%%%%%%%%%%%%%%%%%%%%%%%%
%%%%%%%%%%%%%%%%%%%%%%%

\begin{ack}
  We thank Hansj\"org Geiges for suggesting to use the term {\it fitting},
  and his comments on the first draft of the manuscript.
\end{ack}

%%%%%%%%%%%%%%%%%%%%%%%%%%%%%%%%%%%%%%%%

\end{document}